\newtheorem{theorem}{Theorem}[section]
\newtheorem{lemma}[theorem]{Lemma}
\theoremstyle{definition}
\theoremstyle{remark}
\numberwithin{equation}{section}
\begin{document}
\setcounter{page}{1}

\title[2-Local derivations]{Description of 2-local derivations on some Lie
rings of skew-adjoint matrices}

\author[Ayupov Sh. A.]{\bf Ayupov Sh. A. $^1$}

\address{$^{1}$ Ayupov Shavkat Abdullayevich,
V.I. Romanovskiy Institute of Mathematics, Uzbekistan Academy of Sciences,
Do'rmon yo'li street, Tashkent, 1000125, Uzbekistan.}
\email{\textcolor[rgb]{0.00,0.00,0.84}{ sh$_-$ayupov@mail.ru}}

\author[Arzikulov F. N.]{\bf Arzikulov F. N.$^2$}
\address{$^{2}$ Arzikulov Farhodjon Nematjonovich,
Andizhan State University, Department of Mathematics docent,
University street, Andizhan, 710020, Uzbekistan.}
\email{\textcolor[rgb]{0.00,0.00,0.84}{arzikulovfn@rambler.ru}}

\dedicatory{}

\subjclass[2010]{17B40, 17B65, 16W25, 46L57}

\keywords{derivation, inner Lie derivation, 2-local Lie derivation, Lie ring, Lie
ring of matrices over a commutative ring, skew-adjoint matrix}%

\begin{abstract}
In the present paper we prove that every 2-local inner
derivation on the Lie ring of skew-symmetric matrices over a commutative ring
is an inner derivation. We also apply our technique
to various Lie algebras of infinite dimensional
skew-adjoint matrix-valued maps on a set and prove
that every 2-local spatial derivation on such algebras is a spatial derivation.
\end{abstract}

\maketitle

\section*{Introduction}

In this paper we study 2-local derivations of Lie rings. Investigation of 2-local derivations began with the work \cite{S} of \v{S}emrl, in which he introduced the notion of 2-local derivations and described 2-local derivations on the algebra $B(H)$ of all bounded linear operators on the infinite-dimensional separable Hilbert space $H$. After a number of paper were devoted to 2-local maps on different types of rings, algebras, Banach algebras and Banach spaces. The list of papers devoted to such 2-local maps can be found in the bibliography of \cite{AA3}.

This paper is devoted to the description of 2-local derivations of the Lie ring of skew-adjoint matrices over a commutative ring. By the Poincare-Birkhoff-Witt theorem, every Lie algebra is embedded into some associative algebra. In particular, if in an associative algebra A we take the Lie multiplication $[a,b]=ab-ba$, then we obtain the Lie algebra $(A,[,])$. In this case, every 2-local inner derivation of the algebra $A$ is a derivation if and only if every 2-local inner derivation of the Lie algebra $(A,[,])$ is a derivation (Theorem 1 of \S 2). In general, for any Lie algebra $(L,[,])$, Lie multiplication $[,]$ can be expressed by some associative multiplication. Therefore in many papers the investigation of 2-local Lie derivations is based on associative multiplication. For example, the first paper on 2-local Lie derivations \cite{CLW} of L. Chen, F. Lu and T. Wang is devoted to the description of 2-local Lie derivations of operator algebras over Banach spaces. The first paper devoted to the description of 2-local derivations of Lie algebras that are not isomorphic to associative algebras is the paper \cite{AyuKudRak16} of Sh. Ayupov, K. Kudaybergenov and I. Rakhimov, in which total description of 2-local
Lie derivations for the case of finite dimensional Lie algebras is given. They prove that every 2-local derivation on a finite-dimensional
semi-simple Lie algebra $\mathcal{L}$ over an algebraically closed field of characteristic zero is a derivation.
They also show that a finite-dimensional nilpotent Lie  algebra $\mathcal{L}$ with $\dim \mathcal{L}\geq 2$
admits a 2-local derivation which is not a derivation. At the same time, in \cite{LaiZheng15}
X. Lai and Z.X. Chen give a description of 2-local Lie derivations for the case of finite dimensional
simple Lie algebras. Recently, L. Liu characterized 2-local Lie derivations on a semi-finite
factor von Neumann algebra with dimension greater than 4 in \cite{Liu16}. Finally, in \cite{HLAH} on the algebras
including factor von Neumann algebras, UHF algebras and the Jiang-Su algebra, the authors
prove that every 2-local Lie derivation is a Lie derivation.

In the present paper we consider sufficiently general case of  Lie rings
of skew-symmetric matrices over a commutative ring. Namely, we study inner Lie derivations and 2-local inner
Lie derivations on these matrix Lie rings. We developed an algebraic approach to
investigation of such maps.

In section 2 the relationship between 2-local derivations
and 2-local Lie derivations on associative and Lie rings is studied.

In section 3 we consider 2-local derivations on the Lie ring of matrices over a commutative ring
are studied. Namely, we investigate 2-local inner derivations on the Lie
ring $K_n(\Re)$ of skew-symmetric $n\times n$ matrices over a commutative unital ring $\Re$ and  prove that every such 2-local inner derivation is
an inner derivation. For this propose we develop a Lie analogue of the algebraic approach to
investigation of 2-local derivations applied to matrix
rings over a commutative ring in \cite{AA3}.
As a corollary we established that every 2-local inner Lie derivation on the Lie
algebra $K_n(\mathcal{A})$ of skew-symmetric $n\times n$ matrices over a commutative unital algebra $\mathcal{A}$
is an inner Lie derivation. It should be noted that here we can take a commutative unital algebra over an arbitrary field.
Therefore  the results of \cite{AA3}, \cite{HLAH},  \cite{LaiZheng15} and \cite{Liu16} and
their proofs are  not applicable to obtain the results presented in the present  paper.

In section 4 we also apply this technique
to various Lie algebras of
skew-adjoint infinite dimensional matrix-valued maps on a set and prove
that every 2-local spatial derivation on such algebras is a spatial derivation.

\section{2-local derivations on Lie and associative rings.}

This section is devoted to derivations and 2-local derivations of Lie rings and Lie algebras.

Let $\Re$ be a ring (an algebra) (nonassociative in general). Recall that a (respectively linear) map $D : \Re\to \Re$ is called
a derivation, if $D(x+y)=D(x)+D(y)$ and $D(xy)=D(x)y+xD(y)$ for
any two elements $x$, $y\in \Re$.

A map $\Delta : \Re\to \Re$ is called a 2-local derivation, if for
any two elements $x$, $y\in \Re$ there exists a derivation
$D_{x,y}:\Re\to \Re$ such that $\Delta (x)=D_{x,y}(x)$, $\Delta
(y)=D_{x,y}(y)$.

Let $\Re$ be an associative ring (algebra). A derivation $D$ on $\Re$
is called an inner derivation, if there exists an element $a\in
\Re$ such that
$$
D(x)=ax-xa, x\in \Re.
$$
A map $\Delta : \Re\to \Re$ is called a 2-local inner derivation,
if for any two elements $x$, $y\in \Re$ there exists an element
$a\in \Re$ such that $\Delta (x)=ax-xa$, $\Delta (y)=ay-ya$.

Let $\Re$ be a Lie ring (Lie algebra). Given an an element  $a$ in  $\Re$,
the map $D^L_a(x)=[a,x]-[x,a]$, $x\in \Re$ is a Lie derivation. Such derivation
is called an inner derivation of $\Re$. Let $\Delta$ be a 2-local derivation of $\Re$.
$\Delta$ is called a 2-local inner derivation, if for each pair of elements
$x$, $y\in \Re$ there is an inner derivation $D$ of $\Re$ such that $\Delta(x)=D(x)$, $\Delta(y)=D(y)$.

Let $\mathcal{A}$ be an associative unital ring (algebra over an arbitrary field). Suppose element $2$ is invertible in $\mathcal{A}$.
Then the set $\mathcal{A}$ with respect to the operations of
addition and Lie multiplication
$$
[a,b]=ab-ba, a, b\in \mathcal{A}
$$
is a Lie ring (respectively Lie algebra). This Lie ring (respectively Lie algebra)  will be  denoted by $(\mathcal{A}, [ , ])$. We take $a\in \mathcal{A}$
and the map
$$
D_a^L(x)=[a,x]-[x,a], x\in \mathcal{A}.
$$
This map is additive and moreover it is a derivation. Indeed, we have
$$
D^L_a(x)=[a,x]-[x,a]=(ax-xa)-(xa-ax)=2ax-2xa=D_{2a}(x)
$$
and
$$
D^L_a([x,y])=D_{2a}([x,y])=D_{2a}(x)y+xD_{2a}(y)-(D_{2a}(y)x+yD_{2a}(x))=
$$
$$
[D_{2a}(x),y]+[D_{2a}(y),x]=[D^L_a(x),y]+[x,D^L_a(y)].
$$
Therefore {\it every inner Lie derivation of $(\mathcal{A},[,])$ is an inner derivation of
$\mathcal{A}$.} And also it is easy to see that every inner derivation $D_a=ax-xa$,
$x\in \mathcal{A}$ is an inner Lie derivation of $(\mathcal{A},[,])$. Indeed,
$$
D_a=ax-xa=\frac{1}{2}(ax-xa)-\frac{1}{2}(xa-ax)=[\frac{1}{2}a,x]-[x,\frac{1}{2}a]=D^L_{\frac{1}{2}a}(x).
$$
Let $\Delta$  be a 2-local inner Lie derivation of $(\mathcal{A},[,])$. Then for every pair of elements $x$, $y\in \mathcal{A}$
there is an inner derivation $D$ of $(\mathcal{A},[,])$ such that $\Delta(x)=D(x)$, $\Delta(y)=D(y)$.
But $D$ is also an inner derivation of $\mathcal{A}$. Hence, $\Delta$ is a 2-local inner derivation
of $\mathcal{A}$. So, {\it every 2-local inner derivation of the Lie ring (Lie algebra) $(\mathcal{A},[,])$
is a 2-local inner derivation of the associative ring (respectively associative algebra) $\mathcal{A}$. Conversely, every 2-local inner derivation of the
associative ring (associative algebra) $\mathcal{A}$ is a 2-local inner derivation of the Lie ring (respectively Lie algebra) $(\mathcal{A},[,])$.}
Therefore we have the following theorem.

\begin{theorem} \label{1.1}
Every 2-local inner derivation of the Lie ring (Lie algebra) $(\mathcal{A},[,])$ is a derivation
if and only if every 2-local inner derivation of the
associative ring (respectively associative algebra) $\mathcal{A}$ is a derivation.
\end{theorem}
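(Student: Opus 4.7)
The plan is to derive the theorem as an immediate consequence of the two identifications already established in the preceding discussion. Concretely, the identity $D^L_a = D_{2a}$ and its inverse $D_b = D^L_{b/2}$ (both valid because $2$ is invertible in $\mathcal{A}$) show that the set of inner Lie derivations of $(\mathcal{A},[,])$ and the set of inner associative derivations of $\mathcal{A}$ are literally the \emph{same} collection of maps $\mathcal{A}\to\mathcal{A}$. So I would begin the proof by recording this coincidence explicitly, noting that the two parametrizations $a\mapsto D^L_a$ and $b\mapsto D_b$ of this common collection differ only by the bijection $b=2a$.

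Next I would argue that the 2-local hypothesis transfers freely between the two structures. For any $\Delta:\mathcal{A}\to\mathcal{A}$ and any pair $x, y\in\mathcal{A}$, an inner Lie derivation $D^L_a$ agreeing with $\Delta$ on $\{x,y\}$ is simultaneously the inner associative derivation $D_{2a}$ agreeing with $\Delta$ on $\{x,y\}$, and conversely. Hence the classes of 2-local inner Lie derivations on $(\mathcal{A},[,])$ and of 2-local inner associative derivations on $\mathcal{A}$ are also the same set of maps. Moreover, as was already verified above, each concrete $D^L_a = D_{2a}$ is at once a Lie derivation of $(\mathcal{A},[,])$ and an associative derivation of $\mathcal{A}$, so the notion of \emph{being} a derivation agrees on the two sides for every map that appears in either 2-local class.

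With these identifications, both directions of the biconditional reduce to the same assertion about the same class of maps, and the theorem follows symmetrically: if every 2-local inner derivation of $\mathcal{A}$ is a (associative) derivation, then the same map, viewed as a 2-local inner Lie derivation of $(\mathcal{A},[,])$, is also a Lie derivation by the calculation $D^L_a([x,y])=[D^L_a(x),y]+[x,D^L_a(y)]$; and the reverse implication runs identically through the $a\leftrightarrow 2a$ substitution. There is no substantive obstacle; the only point requiring care is that both substitutions $a\mapsto 2a$ and $b\mapsto b/2$ are legitimate, which is exactly what the invertibility of $2$ guarantees. Without that hypothesis one direction of the correspondence could break, which explains why it is assumed in the setup.
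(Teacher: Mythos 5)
Your argument is essentially the paper's own: the stated proof of this theorem is nothing more than the discussion immediately preceding it, which records exactly your identifications $D^L_a=D_{2a}$ and $D_b=D^L_{b/2}$, concludes that inner Lie derivations and inner associative derivations of $\mathcal{A}$ (and hence the two classes of 2-local inner derivations) coincide as sets of maps, and reads off the equivalence. One caution, which applies equally to the paper's write-up: the two directions are not literally symmetric as your phrase ``runs identically'' suggests, since an additive associative derivation is automatically a derivation of the commutator bracket, whereas a Lie derivation of $(\mathcal{A},[\,,\,])$ need not satisfy the Leibniz rule for the associative product; the converse direction is only immediate if ``is a derivation'' is read as ``is an inner derivation,'' which is in fact the conclusion the later theorems of the paper deliver.
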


Let, now, $\mathcal{A}$ be a $*$-ring ($*$-algebra) and $\mathcal{A}_k$ be the set of all skew-adjoint elements of
$\mathcal{A}$. Then, it is known that $(\mathcal{A}_k,[,])$ is a Lie ring (respectively Lie algebra). We take
$a\in\mathcal{A}_k$ and the inner derivation
$$
D^L_a(x)=[a,x]-[x,a], x\in \mathcal{A}_k.
$$
Then
$$
D^L_a(x)=D_{2a}(x), x\in \mathcal{A}_k.
$$
At the same time the map
$$
D_{2a}(x), x\in \mathcal{A}
$$
is an inner derivation on $\mathcal{A}$ and an extension of the derivation $D^L_a$.
Therefore {\it every inner derivation of the Lie ring (Lie algebra) $(\mathcal{A}_k,[,])$ is extended to
an inner derivation of the $*$-ring (respectively $*$-algebra) $\mathcal{A}$.}

As to a 2-local inner derivation, in this
case it is possible to discuss the extension of a 2-local inner derivation of the Lie
ring $(\mathcal{A}_k,[,])$ to a 2-local inner derivation of the involutive ring $\mathcal{A}$ under some
conditions. However, till now it was not possible
to carry out such extension without additional conditions that raises value of the proof of the main result
of the given paper stated below.

\section{2-Local derivations on the Lie ring of skew-symmetric matrices over a commutative ring}

Let $\Re$ be a unital associative ring, $M_n(\Re)$ be the
matrix ring over $\Re$, $n>1$, of matrices of the form
$$
\left[%
\begin{array}{cccc}
 a^{1,1} & a^{1,2} & \cdots & a^{1,n}\\
a^{2,1} & a^{2,2} & \cdots & a^{2,n}\\
\vdots & \vdots & \ddots & \vdots\\
a^{n,1} & a^{n,2} & \cdots & a^{n,n}\\
\end{array}%
\right],
a^{i,j}\in \Re, i,j=1,2,\dots ,n.
$$
Let $\{e_{i,j}\}_{i,j=1}^n$ be the set of matrix units in
$M_n(\Re)$, i.e. $e_{i,j}$ is a matrix with components
$a^{i,j}={\bf 1}$ and $a^{k,l}={\bf 0}$ if $(i,j)\neq(k,l)$, where
${\bf 1}$ is the identity element, ${\bf 0}$ is the zero element of
$\Re$, and a matrix $a\in M_n(\Re)$ is written as $a=\sum_{k,l=1}^n
a^{k,l}e_{k,l}$, where $a^{k,l}\in \Re$ for $k,l=1,2,\dots, n$.

\medskip

Let $\Re$ be a commutative unital ring, $M_n(\Re)$ be an associative ring of $n\times n$ matrices
over $\Re$, $n>1$. Suppose element $2$ is invertible in $\Re$. In this case the set
$$
K_n(\Re)=\{
\left[%
\begin{array}{cccc}
a^{1,1} & a^{1,2} & \cdots & a^{1,n}\\
a^{2,1} & a^{2,2} & \cdots & a^{2,n}\\
\vdots & \vdots & \ddots & \vdots\\
a^{n,1} & a^{n,2} & \cdots & a^{n,n}\\
\end{array}%
\right]
\in M_n(\Re):
$$
$$
a^{i,i}=0,
a^{i,j}=-a^{j,i}, i<j,
i,j=1,2,\dots ,n\}
$$
is a Lie ring with respect to the addition and the Lie multiplication
$$
[a,b]=ab-ba, a, b\in K_n(\Re).
$$
This Lie ring we denote by $K_n(\Re)$.
Throughout this section, let $s_{i,j}=e_{i,j}-e_{j,i}$ for every pair of different indices $i$, $j$
in $\{1,2,\dots ,n\}$.

\begin{lemma}  \label{2.4}
Let $\Re$ be a commutative unital ring, $\Delta$ be a 2-local inner derivation
on $K_n(\Re)$ and let $D^L_a$, $D^L_b$ be inner derivations on $K_n(\Re)$,
generated by $a$, $b\in K_n(\Re)$ such that
$$
\Delta (s_{i,j})=D^L_a(s_{i,j})=D^L_b(s_{i,j})
$$
for an arbitrary pair of different indices $i$, $j$ such that $i<j$.
Then
$$
a^{ki}=b^{ki}, a^{kj}=b^{kj}, a^{ik}=b^{ik}, a^{jk}=b^{jk}.
$$
for all $k$ distinct from $i$ and $j$.
\end{lemma}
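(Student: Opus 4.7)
The plan is a direct matrix-unit computation. Since any Lie bracket is antisymmetric, I first rewrite
$$
D_a^L(s_{i,j}) = [a, s_{i,j}] - [s_{i,j},a] = 2[a, s_{i,j}] = 2(a\, s_{i,j} - s_{i,j}\, a),
$$
and similarly for $b$. The hypothesis $D_a^L(s_{i,j}) = D_b^L(s_{i,j})$ then reduces, after dividing by $2$ (which is permitted since $2$ is invertible in $\Re$), to the matrix identity $[a,s_{i,j}] = [b,s_{i,j}]$.

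Next I would expand $a\,s_{i,j}$ and $s_{i,j}\,a$ using $s_{i,j}=e_{i,j}-e_{j,i}$ together with the standard rule $e_{p,q}e_{r,s}=\delta_{q,r}e_{p,s}$. A short bookkeeping gives, for every index $k\notin\{i,j\}$,
$$
[a,s_{i,j}]^{k,i} = -a^{k,j}, \qquad [a,s_{i,j}]^{k,j} = a^{k,i},
$$
$$
[a,s_{i,j}]^{i,k} = -a^{j,k}, \qquad [a,s_{i,j}]^{j,k} = a^{i,k}.
$$
All remaining nonzero entries of $[a,s_{i,j}]$ live in the $2\times 2$ principal submatrix indexed by $\{i,j\}$, and they depend only on $a^{i,i},a^{j,j},a^{i,j},a^{j,i}$ (in fact they vanish by skew-symmetry, but this is irrelevant for the lemma since those positions carry no information about $a^{k,\cdot}$ or $a^{\cdot,k}$).

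Reading the four displayed entries off for $b$ and equating them with the corresponding entries for $a$ yields $a^{k,i}=b^{k,i}$ and $a^{k,j}=b^{k,j}$ directly; the identities $a^{i,k}=b^{i,k}$ and $a^{j,k}=b^{j,k}$ follow either from the companion formulas or, more cheaply, from the skew-symmetry $a^{p,q}=-a^{q,p}$ that defines $K_n(\Re)$.

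The main obstacle is essentially organizational: keeping signs and indices straight in the expansion, and verifying that the "in-block" entries (those with both indices in $\{i,j\}$) genuinely contain no extra constraints, so the lemma really only controls the "cross" entries $a^{p,q}$ with exactly one of $p,q$ in $\{i,j\}$. Conceptually, the statement is the observation that the action of an inner derivation on the single generator $s_{i,j}$ exactly pins down the $i$-th and $j$-th rows and columns of the generating skew-symmetric matrix outside the $\{i,j\}\times\{i,j\}$ block; once that perspective is in place, the proof is a one-line matrix calculation.
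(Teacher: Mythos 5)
Your proof is correct and takes essentially the same route as the paper's: both reduce the hypothesis to $[a,s_{i,j}]=[b,s_{i,j}]$ (equivalently, that $a-b$ commutes with $s_{i,j}$) and then extract the cross-block entries by a matrix-unit computation, your displayed formulas for $[a,s_{i,j}]^{k,i}$, $[a,s_{i,j}]^{k,j}$, $[a,s_{i,j}]^{i,k}$, $[a,s_{i,j}]^{j,k}$ being accurate. The paper merely packages the same entry extraction by multiplying the commutation relation once more by $s_{i,j}$, so no substantive difference remains.
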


\begin{proof}
By the definition of an inner derivation
$$
D^L_a(s_{i,j})=D_{2a}(s_{i,j}),
$$
$$
D^L_b(s_{i,j})=D_{2b}(s_{i,j}).
$$
Hence, since $D^L_a(s_{i,j})=D^L_b(s_{i,j})$ we have
$$
D_{2a}(s_{i,j})=D_{2b}(s_{i,j}),
$$
i.e.
$$
as_{i,j}-s_{i,j}a=bs_{i,j}-s_{i,j}b.
$$
Then
$$
as_{i,j}-bs_{i,j}=s_{i,j}a-s_{i,j}b\,\,and\,\,(a-b)s_{i,j}=s_{i,j}(a-b).
$$
Hence
$$
(a-b)s_{i,j}^2=s_{i,j}(a-b)s_{i,j}.
$$
From this follows that
$$
a^{i,k}=b^{i,k}, a^{j,k}=b^{j,k}
$$
when $j<k$,
$$
a^{i,k}=b^{i,k}, a^{k,j}=b^{k,j}
$$
when $i<k<j$ and
$$
a^{k,i}=b^{k,i}, a^{k,j}=b^{k,j}
$$
when $k<i$. The proof is complete.
\end{proof}

\begin{lemma}  \label{2.5}
Let $\Re$ be a commutative unital ring, $\Delta$ be a 2-local inner derivation
on $K_n(\Re)$. Then there exist $a\in K_n(\Re)$ such that
$$
\Delta (s_{i,j})=D^L_a(s_{i,j})
$$
for every pair of different indices $i$, $j$.
\end{lemma}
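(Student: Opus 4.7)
The plan is to synthesize a single matrix $a \in K_n(\Re)$ from the local witnesses supplied by 2-locality, using Lemma~\ref{2.4} as the gluing tool. One may assume $n \ge 3$; for $n = 2$, a direct computation shows $D^L_a(s_{1,2}) = 0$ for every $a \in K_2(\Re)$, so $\Delta(s_{1,2})$ must vanish and $a = 0$ works.

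For each pair $i \ne j$, 2-locality (applied to $(s_{i,j}, s_{i,j})$) yields a witness $a_{i,j} \in K_n(\Re)$ satisfying $\Delta(s_{i,j}) = D^L_{a_{i,j}}(s_{i,j})$, and by Lemma~\ref{2.4} the entries $a_{i,j}^{k,i}$ and $a_{i,j}^{k,j}$ (for $k \notin \{i,j\}$) depend only on the pair, not on the particular witness. I then define the matrix $a$ by setting $a^{p,p} = 0$ for each $p$, and, for $p \ne q$, putting $a^{p,q} := a_{q,r}^{p,q}$ for any choice of $r \notin \{p,q\}$, which exists since $n \ge 3$.

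Three facts require verification, each a routine application of 2-locality combined with Lemma~\ref{2.4}. First, the value $a^{p,q}$ is independent of the choice of $r$: given two choices $r, r'$, the joint 2-local witness for the pair $(s_{q,r}, s_{q,r'})$ reproduces the same entry on both sides. Second, $a \in K_n(\Re)$, that is $a^{p,q} = -a^{q,p}$: one takes the joint 2-local witness $c$ for the pair $(s_{q,r}, s_{p,r})$ with $r \notin \{p,q\}$; Lemma~\ref{2.4} gives $c^{p,q} = a^{p,q}$ from the first slot and $c^{q,p} = a^{q,p}$ from the second, while $c \in K_n(\Re)$ is itself skew-symmetric. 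Third, $\Delta(s_{i,j}) = D^L_a(s_{i,j})$ for every pair $(i,j)$: by Lemma~\ref{2.4} it suffices to show $a^{k,i} = a_{i,j}^{k,i}$ and $a^{k,j} = a_{i,j}^{k,j}$ for $k \notin \{i,j\}$, which follows from the definition of $a$ upon choosing $r = j$ and $r = i$ respectively, both valid since $j, i \ne k$.

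The main obstacle is the skew-symmetry step. The entries $a^{p,q}$ and $a^{q,p}$ are extracted from witnesses for a priori unrelated pairs of indices, so there is no direct reason for them to be negatives of one another. The resolution is to invoke 2-locality jointly on the two governing generators $s_{q,r}$ and $s_{p,r}$, producing a single skew-symmetric element of $K_n(\Re)$ whose entries simultaneously witness both quantities and thereby transfer the skew-symmetry of the ambient ring to the assembled matrix.
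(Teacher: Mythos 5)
Your proof is correct, and it reaches the conclusion by a noticeably different assembly than the paper's. The paper does not build a new matrix at all: it strings the generators $s_{1,2},s_{1,3},\dots,s_{n-1,n}$ into a single cyclic chain, takes a joint witness for each consecutive pair, applies Lemma~\ref{2.4} to each link, and then declares all the witnesses $a(i,j)$ equal, taking $a:=a(1,2)$. That choice makes membership in $K_n(\Re)$ automatic, but the asserted equality of all the $a(i,j)$ is stronger than what the displayed relations give (Lemma~\ref{2.4} controls only the rows and columns indexed by the shared pair, and says nothing about the $(i,j)$ and $(j,i)$ entries themselves); the argument survives only because those uncontrolled entries do not affect $D^L_{a}(s_{i,j})$. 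Your entrywise construction $a^{p,q}:=a_{q,r}^{p,q}$ avoids that looseness: you verify independence of $r$, transfer skew-symmetry from a joint witness for $(s_{q,r},s_{p,r})$, and check the required identity pair by pair, and you also dispose of $n=2$, which the paper's chain tacitly assumes away. The one imprecision is your appeal to Lemma~\ref{2.4} for the final step: as stated, that lemma gives the implication ``equal values on $s_{i,j}$ $\Rightarrow$ equal relevant entries,'' whereas you need the converse, namely that for skew-symmetric matrices $D^L_a(s_{i,j})$ is determined by the entries $a^{k,i},a^{k,j}$ with $k\notin\{i,j\}$ (the $(i,j)$, $(j,i)$ and diagonal entries contribute nothing to $[a,s_{i,j}]$). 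That converse is a one-line computation already implicit in the proof of Lemma~\ref{2.4}, so this is a citation issue rather than a gap.
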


\begin{proof}
By the definition of 2-local inner derivation
there exists a set $\{a(i,j)\}_{i<j}\subset K_n(\Re)$ such that
$$
\Delta (s_{1,2})=D^L_{a(1,2)}(s_{1,2})=D^L_{a(1,3)}(s_{1,2}),\Delta (s_{1,3})=D^L_{a(1,3)}(s_{1,3})=D^L_{a(1,4)}(s_{1,3}),
$$
$$
\Delta (s_{1,4})=D^L_{a(1,4)}(s_{1,4})=D^L_{a(1,5)}(s_{1,4}),...,\Delta (s_{1,n})=D^L_{a(1,n)}(s_{1,n})=D^L_{a(2,3)}(s_{1,n}),
$$
$$
\Delta (s_{2,3})=D^L_{a(2,3)}(s_{2,3})=D^L_{a(2,4)}(s_{2,3}),\Delta (s_{2,4})=D^L_{a(2,4)}(s_{2,4})=D^L_{a(2,5)}(s_{2,4}),
$$
$$
\Delta (s_{2,5})=D^L_{a(2,5)}(s_{2,5})=D^L_{a(2,6)}(s_{2,5}),...,\Delta (s_{2,n})=D^L_{a(2,n)}(s_{2,n})=D^L_{a(3,4)}(s_{2,n}),
$$
$$
..........................................................................
$$
$$
\Delta (s_{n-2,n-1})=D^L_{a(n-2,n-1)}(s_{n-2,n-1})=D^L_{a(n-2,n)}(s_{n-2,n-1}),
$$
$$
\Delta (s_{n-2,n})=D^L_{a(n-2,n)}(s_{n-2,n})=D^L_{a(n-1,n)}(s_{n-2,n}),
$$
$$
\Delta (s_{n-1,n})=D^L_{a(n-1,n)}(s_{n-1,n})=D^L_{a(1,2)}(s_{n-1,n}).
$$
Then by lemma \ref{2.4} for each pair $i$, $j$ of indices such that $i<j<n$ we have
$$
a(i,j)^{i,k}=a(i,j+1)^{i,k}, a(i,j)^{j,k}=a(i,j+1)^{j,k}
$$
for all $k$ distinct from $i$ and $j$,
$$
a(i,n)^{i,k}=a(i,i+1)^{i,k}, a(i,n)^{n,k}=a(i,i+1)^{n,k}
$$
for all $k$ distinct from $i$ and $n$ and
$$
a(n-1,n)^{n-1,k}=a(1,2)^{n-1,k}, a(n-1,n)^{n,k}=a(1,2)^{n,k}
$$
for all $k$ distinct from $n-1$ and $n$.
Hence
$$
a(1,2)=a(1,3)=...=a(1,n)=
$$
$$
a(2,3)=a(2,4)=...=a(2,n)=...=a(n-1,n).
$$
The proof is complete.
\end{proof}

\begin{theorem} \label{2.6}
Let $\Re$ be a commutative unital ring, and let $K_n(\Re)$, $n>3$,
be the Lie ring of skew-symmetric $n\times n$ matrices over $\Re$. Then
any 2-local inner derivation on the matrix Lie ring $K_n(\Re)$ is an inner derivation.
\end{theorem}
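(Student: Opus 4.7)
The plan is to show $\Delta(x) = D^L_a(x)$ for every $x \in K_n(\Re)$, where $a \in K_n(\Re)$ is the element produced by Lemma~\ref{2.5}. Since $\Delta(x) - D^L_a(x)$ lies in $K_n(\Re)$ and is therefore skew-symmetric with vanishing diagonal, it suffices to check that its $(i,j)$-entry is zero for every $i < j$. My strategy is to fix such a pair $(i,j)$, use 2-locality applied to $(x, s_{i,j})$, and then read off the $(i,j)$-entry, where two independent cancellations collapse the expression.

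For fixed $x$ and $i<j$, 2-locality supplies $b = b(x,i,j) \in K_n(\Re)$ with $\Delta(x) = D^L_b(x)$ and $\Delta(s_{i,j}) = D^L_b(s_{i,j}) = D^L_a(s_{i,j})$. Applying Lemma~\ref{2.4} to $a$ and $b$ at $s_{i,j}$ forces every entry of $b-a$ in row $i$, row $j$, column $i$, or column $j$ to vanish, except possibly at positions $(i,j)$ and $(j,i)$. Combined with skew-symmetry and the zero diagonal of $K_n(\Re)$, this yields the decomposition
$$
b - a = \alpha\, s_{i,j} + c, \qquad \alpha \in \Re,
$$
where $c \in K_n(\Re)$ is supported outside rows $i,j$ and columns $i,j$ (that is, $c^{p,q} = 0$ whenever $p \in \{i,j\}$ or $q \in \{i,j\}$). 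Using $D^L_y(z) = 2[y,z]$ together with $\Re$-bilinearity of the bracket, this gives
$$
\Delta(x) - D^L_a(x) = D^L_{b-a}(x) = 2\alpha[s_{i,j}, x] + 2[c, x].
$$

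Both summands are invisible in the $(i,j)$-slot. For the second, $[c,x]^{i,j} = \sum_k (c^{i,k}x^{k,j} - x^{i,k}c^{k,j})$, and the support condition on $c$ kills every $c^{i,k}$ and every $c^{k,j}$. For the first, expand $x = \sum_{k<l} x^{k,l} s_{k,l}$; each bracket $[s_{i,j}, s_{k,l}]$ is either zero (when $\{i,j\}$ and $\{k,l\}$ are equal or disjoint) or a $\pm$ multiple of some $s_{p,q}$ with $\{p,q\} \neq \{i,j\}$ (obtained by deleting the shared index from $\{i,j\} \cup \{k,l\}$), so $[s_{i,j}, s_{k,l}]^{i,j} = 0$ in every case. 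Hence $(\Delta(x) - D^L_a(x))^{i,j} = 0$ for each $i<j$, finishing the proof. The substantive step is the structural splitting of $b-a$ granted by Lemma~\ref{2.4}, which pushes all freedom into the direction $s_{i,j}$ plus a block disjoint from $\{i,j\}$; once this is in place the $(i,j)$-entry cancellation is automatic, and I do not expect any obstacle beyond careful bookkeeping of the index ranges.
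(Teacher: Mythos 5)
Your proposal is correct and follows essentially the same route as the paper: Lemma~\ref{2.5} produces the candidate $a$, then for each $x$ and each pair $i<j$ you test $\Delta$ against $(x,s_{i,j})$ and use Lemma~\ref{2.4} to see that the auxiliary element $b$ agrees with $a$ in rows and columns $i,j$ away from the $(i,j)$-slot, which forces the $(i,j)$-entry of $\Delta(x)=D^L_b(x)$ to coincide with that of $D^L_a(x)$. Your packaging of the last step as $b-a=\alpha s_{i,j}+c$ with both pieces invisible in the $(i,j)$-entry is just a cleaner way of writing the paper's explicit computation of $e_{i,i}\Delta(x)e_{j,j}+e_{j,j}\Delta(x)e_{i,i}$ as a sum over $k\neq i,j$; no substantive difference.
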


\begin{proof}
Let $\Delta: K_n(\Re)\to K_n(\Re)$ be an arbitrary 2-local derivation.
By lemma \ref{2.5} we have an element $a\in K_n(\Re)$ such that
$$
\Delta(s_{i,j})=D^L_a(s_{i,j})
$$
for every pair of different indices $i$, $j$ satisfying $i<j$.

Let $x$ be an arbitrary element in $K_n(\Re)$ and $b$ be an element in $K_n(\Re)$ such that
$$
\Delta(s_{1,2})=D^L_b(s_{1,2}),
\Delta(x)=D^L_b(x)
$$
Then
$$
a^{1,k}=b^{1,k}, a^{2,k}=b^{2,k}.
$$
for all $k$ distinct from $1$ and $2$ by lemma \ref{2.4}.
Hence
$$
e_{1,1}\Delta(x)e_{2,2}+e_{2,2}\Delta(x)e_{1,1}=e_{1,1}2(bx-xb)e_{2,2}+e_{2,2}2(bx-xb)e_{1,1}=
$$
$$
\sum_{k=3}^n2[x^{1,k}b^{2,k}-b^{1,k}x^{2,k}]s_{1,2}=\sum_{k=3}^n2[x^{1,k}a^{2,k}-a^{1,k}x^{2,k}]s_{1,2}.
$$

Let $i$, $j$ be an arbitrary pair of different indices satisfying $i<j$ and $c$ be an element in $K_n(\Re)$ such that
$$
\Delta(s_{i,j})=D^L_c(s_{i,j}),
\Delta(x)=D^L_c(x)
$$
Then
$$
a^{i,k}=c^{i,k}, a^{j,k}=c^{j,k}.
$$
for all $k$ distinct from $i$ and $j$ by lemma \ref{2.4}.
Hence we have
$$
e_{i,i}\Delta(x)e_{j,j}+e_{j,j}\Delta(x)e_{i,i}=e_{i,i}2(bx-xb)e_{j,j}+e_{j,j}2(bx-xb)e_{i,i}=
$$
$$
\sum_{k\neq i,k\neq j}^n2[x^{i,k}b^{j,k}-b^{i,k}x^{j,k}]s_{i,j}=\sum_{k\neq i,k\neq j}^n2[x^{i,k}a^{j,k}-a^{i,k}x^{j,k}]s_{i,j}.
$$
Hence
$$
\Delta(x)=\sum_{i<j}[e_{i,i}\Delta(x)e_{j,j}+e_{j,j}\Delta(x)e_{i,i}]=
$$
$$
\sum_{i<j}[\sum_{k\neq i,k\neq j}2[x^{i,k}a^{j,k}-a^{i,k}x^{j,k}]s_{i,j}]=
$$
$$
2(ax-xa)=[a,x]-[x,a]=D^L_a(x).
$$
Since $x$ was arbitrarily chosen, $\Delta$ is an inner derivation. This completes the proof.
\end{proof}

The proof of Theorem \ref{2.6} is also valid for Lie algebras of skew-symmetric matrices over
a commutative algebra. Therefore we have the following statement.

\begin{theorem} \label{2.7}
Let $\mathcal{A}$ be a commutative unital algebra, and let $K_n(\mathcal{A})$, $n>3$,
be the Lie algebra of skew-symmetric $n\times n$ matrices over $\mathcal{A}$. Then
any 2-local inner derivation on the matrix Lie algebra $K_n(\mathcal{A})$ is an inner derivation.
\end{theorem}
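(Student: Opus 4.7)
The plan is to observe that Theorem \ref{2.7} follows almost immediately from Theorem \ref{2.6}, by viewing the commutative unital algebra $\mathcal{A}$ as a commutative unital ring in which $2$ is invertible (this is already tacitly assumed in defining $K_n(\mathcal{A})$, as in Section 3). Under this identification, the Lie algebra $K_n(\mathcal{A})$ and the Lie ring $K_n(\mathcal{A})$ have the same underlying abelian group and the same bracket, and the inner derivations $D^L_a$ for $a \in K_n(\mathcal{A})$ are given by exactly the same formula $x \mapsto [a,x]-[x,a]$ in both settings.

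The first step is therefore to note that any 2-local inner derivation $\Delta$ on the Lie algebra $K_n(\mathcal{A})$ is automatically a 2-local inner derivation on the Lie ring $K_n(\mathcal{A})$: for each pair $x, y \in K_n(\mathcal{A})$, the element $a \in K_n(\mathcal{A})$ witnessing the 2-local property on the algebra side witnesses it equally well on the ring side, because the inner derivation attached to $a$ is the same map. Applying Theorem \ref{2.6} to $\Delta$ viewed as a 2-local inner derivation of the Lie ring $K_n(\mathcal{A})$ (we use $n > 3$) yields an element $a \in K_n(\mathcal{A})$ such that $\Delta(x) = D^L_a(x) = [a,x]-[x,a]$ for every $x \in K_n(\mathcal{A})$.

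The final step is to check that $D^L_a$ is not merely a Lie ring derivation but a Lie algebra derivation, i.e.\ $F$-linear for the base field $F$ of $\mathcal{A}$; this is immediate from the $F$-bilinearity of the bracket. Hence $\Delta$ coincides with an inner derivation of the Lie algebra $K_n(\mathcal{A})$, completing the proof. There is no genuine obstacle here: the combinatorial heart of the argument (Lemmas \ref{2.4} and \ref{2.5}, which align the witnesses of $\Delta$ on all the generators $s_{i,j}$ into a single element $a$) has already been carried out at the ring level in Theorem \ref{2.6}, and nothing about that argument uses the ambient structure beyond the commutative unital ring of scalars with $2$ invertible.
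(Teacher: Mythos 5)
Your proposal is correct and matches the paper's approach: the paper simply observes that the proof of Theorem \ref{2.6} goes through verbatim for a commutative unital algebra, and your reduction (a 2-local inner derivation of the Lie algebra is one of the underlying Lie ring, Theorem \ref{2.6} produces the implementing element $a$, and $D^L_a$ is automatically linear) is just a slightly more formal packaging of the same observation. No issues.
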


\section{2-local derivations on Lie algebras of skew-adjoint matrix-valued
maps}

Recall that a Hilbert space $H$ is an infinite dimensional inner product space which
is a complete metric space with respect to the metric generated by the inner product \cite[Section 1.5]{AG}.

Throughout this section, let $n$ be an arbitrary cardinal
number and, let $\Xi$ be a set of indices of cardinality $n$. Now let
$H$ be a real Hilbert space of dimension $n$ and, let $B(H)$ be the real von Neumann algebra of
all bounded linear operators on $H$. Let $\{e_i\}_{i\in \Xi}$ be a maximal family of orthogonal minimal
projections in $B(H)$ and, let $\{e_{i,j}\}_{i,j\in \Xi}$ be the family of matrix units defined
by $\{e_i\}_{i\in \Xi}$, i.e. $e_{i,i}=e_i$, $e_{i,i}=e_{i,j}e_{j,i}$ and $e_{j,j}=e_{j,i}e_{i,j}$ for
each pair $i$, $j$ of indices from $\Xi$.

Let $B_{sk}(H)$ be the vector space of all skew-adjoint matrices in $B(H)$, i.e.
$$
B_{sk}(H)=\{a\in B(H): a^*=-a\}.
$$
Then with respect to Lie multiplication
$$
[a,b]=ab-ba, a,b\in B_{sk}(H)
$$
$B_{sk}(H)$ is a Lie algebra.

Let $\Omega$ be an arbitrary set, $M(\Omega,B_{sk}(H))$ be the
Lie algebra of all maps of $\Omega$ to $B_{sk}(H)$. Put
$$
\hat{e}_{i,j}=\sum_{\xi,\eta\in \Xi}\lambda^{\xi,\eta}e_{\xi,\eta},
$$
where for all $\xi$, $\eta$, if $\xi=i$, $\eta=j$ then
$\lambda^{\xi,\eta}={\bf 1}$, else $\lambda^{\xi,\eta}=0$, ${\bf 1}$
is unit of the algebra $F(\Omega)$ of all real number-valued maps on $\Omega$.
Let $s_{i,j}=\hat{e}_{i,j}-\hat{e}_{j,i}$ for all
distinct $i$, $j\in \Xi$.

\medskip

{\it Definition.} Let $A$ be a Lie algebra and, let $B$ be a Lie subalgebra of $A$.
A derivation $D$ on $B$ is said to be spatial, if $D$ is implemented by an element in $A$,
i.e.
$$
D(x)=ax-xa, x\in B,
$$
for some $a\in A$. A 2-local derivation $\Delta$ on $B$ is called 2-local spatial derivation
with respect to derivations implemented by an element in $A$, if
for every two elements $x$, $y\in B$ there exists an element
$a\in A$ such that $\Delta(x)=ax-xa$, $\Delta(y)=ay-ya$.

\begin{lemma}  \label{3.2}
Let $\Omega$ be an arbitrary set, $M(\Omega,B_{sk}(H))$ be the
Lie algebra of all maps of $\Omega$ to $B_{sk}(H)$. Let
$\mathcal{L}$ be a Lie subalgebra of $M(\Omega,B_{sk}(H))$ containing the family
$\{s_{i,j}\}_{i,j\in \Xi}$ and $\Delta$ be a 2-local spatial derivation
on $\mathcal{L}$
with respect to derivations implemented by an element in $M(\Omega,B_{sk}(H))$
and $D^L_a$, $D^L_b$ be spatial derivations on $\mathcal{L}$,
implemented by $a$, $b\in M(\Omega,B_{sk}(H))$, respectively, such that
$$
\Delta (s_{i,j})=D^L_a(s_{i,j})=D^L_b(s_{i,j})
$$
for an arbitrary pair of different indices $i$, $j$. Then
$$
(1-(\hat{e}_{i,i}+\hat{e}_{j,j}))a\hat{e}_{i,i}=(1-(\hat{e}_{i,i}+\hat{e}_{j,j}))b\hat{e}_{i,i},
$$
$$
(1-(\hat{e}_{i,i}+\hat{e}_{j,j}))a\hat{e}_{j,j}=(1-(\hat{e}_{i,i}+\hat{e}_{j,j}))b\hat{e}_{j,j},
$$
$$
\hat{e}_{i,i}a(1-(\hat{e}_{i,i}+\hat{e}_{j,j}))=\hat{e}_{i,i}b(1-(\hat{e}_{i,i}+\hat{e}_{j,j})),
$$
$$
\hat{e}_{j,j}a(1-(\hat{e}_{i,i}+\hat{e}_{j,j}))=\hat{e}_{j,j}b(1-(\hat{e}_{i,i}+\hat{e}_{j,j})).
$$
\end{lemma}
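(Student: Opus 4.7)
The plan is to imitate the matrix-unit manipulation from Lemma~\ref{2.4}, now carried out in the enveloping associative algebra $M(\Omega, B(H))$ (which, in contrast to the skew-adjoint Lie subalgebra, contains the self-adjoint matrix units $\hat{e}_{i,i}$ and the multiplicative identity). Writing $c := a - b$ and unwinding the defining formula for a spatial derivation, the hypothesis $D^L_a(s_{i,j}) = D^L_b(s_{i,j})$ collapses (after an evident cancellation) to the single commutation relation
\[
c\,s_{i,j} \;=\; s_{i,j}\,c.
\]

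Next I would introduce the corner projection $p := \hat{e}_{i,i} + \hat{e}_{j,j}$, observe the matrix-unit identities $p\,s_{i,j} = s_{i,j} = s_{i,j}\,p$, and therefore $(1-p)s_{i,j} = 0 = s_{i,j}(1-p)$. Multiplying the commutation relation on the left by $1-p$ annihilates its right-hand side and yields
\[
(1-p)\,c\,\hat{e}_{i,j} \;=\; (1-p)\,c\,\hat{e}_{j,i}.
\]
I would then right-multiply this identity successively by $\hat{e}_{j,i}$ (using $\hat{e}_{i,j}\hat{e}_{j,i} = \hat{e}_{i,i}$ and $\hat{e}_{j,i}\hat{e}_{j,i} = 0$) and by $\hat{e}_{i,j}$ (using $\hat{e}_{j,i}\hat{e}_{i,j} = \hat{e}_{j,j}$ and $\hat{e}_{i,j}\hat{e}_{i,j} = 0$) to read off $(1-p)c\hat{e}_{i,i} = 0$ and $(1-p)c\hat{e}_{j,j} = 0$, respectively.

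The remaining two identities follow from the symmetric procedure: multiplying $c\,s_{i,j} = s_{i,j}\,c$ on the right by $1-p$ now kills the left-hand side and gives $\hat{e}_{i,j}c(1-p) = \hat{e}_{j,i}c(1-p)$, after which left multiplication by $\hat{e}_{j,i}$ and $\hat{e}_{i,j}$ in turn produces $\hat{e}_{j,j}c(1-p) = 0$ and $\hat{e}_{i,i}c(1-p) = 0$. Restoring $c = a - b$ in the four resulting equations is exactly the statement of the lemma. The main point requiring care — rather than any serious technical obstacle — is the choice of ambient algebra: every step above uses the identity $1$ and the self-adjoint projections $\hat{e}_{i,i}$, $\hat{e}_{j,j}$, none of which lies in the Lie algebra $M(\Omega, B_{sk}(H))$ itself, so the entire calculation must be read inside $M(\Omega, B(H))$, where the associative product and these auxiliary elements are available.
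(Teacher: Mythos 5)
Your proposal is correct and follows essentially the same route as the paper's own proof: both reduce the hypothesis to the commutation relation $(a-b)s_{i,j}=s_{i,j}(a-b)$, kill one side by multiplying with $1-(\hat{e}_{i,i}+\hat{e}_{j,j})$, and then peel off the individual corners using the matrix-unit identities, all carried out in the ambient associative algebra $M(\Omega,B(H))$. The only differences are cosmetic (working with $c=a-b$ and making the role of the enveloping algebra explicit, which is a point the paper leaves implicit).
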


\begin{proof}
By the definition of an inner derivation
$$
D^L_a(s_{i,j})=D_{2a}(s_{i,j}),
$$
$$
D^L_b(s_{i,j})=D_{2b}(s_{i,j}).
$$
Hence, since $D^L_a(s_{i,j})=D^L_b(s_{i,j})$ we have
$$
D_{2a}(s_{i,j})=D_{2b}(s_{i,j}),
$$
i.e.
$$
as_{i,j}-s_{i,j}a=bs_{i,j}-s_{i,j}b.
$$
It follows from this that
$$
s_{i,j}a(1-(\hat{e}_{i,i}+\hat{e}_{j,j}))=s_{i,j}b(1-(\hat{e}_{i,i}+\hat{e}_{j,j}))
$$
since $s_{i,j}(1-(\hat{e}_{i,i}+\hat{e}_{j,j}))=0$.
Hence
$$
\hat{e}_{i,i}a(1-(\hat{e}_{i,i}+\hat{e}_{j,j}))=\hat{e}_{i,i}b(1-(\hat{e}_{i,i}+\hat{e}_{j,j})),
$$
$$
\hat{e}_{j,j}a(1-(\hat{e}_{i,i}+\hat{e}_{j,j}))=\hat{e}_{j,j}b(1-(\hat{e}_{i,i}+\hat{e}_{j,j})).
$$
Similarly,
$$
(1-(\hat{e}_{i,i}+\hat{e}_{j,j}))as_{i,j}=(1-(\hat{e}_{i,i}+\hat{e}_{j,j}))bs_{i,j}
$$
since $(1-(\hat{e}_{i,i}+\hat{e}_{j,j}))s_{i,j}=0$. Hence
$$
(1-(\hat{e}_{i,i}+\hat{e}_{j,j}))a\hat{e}_{i,i}=(1-(\hat{e}_{i,i}+\hat{e}_{j,j}))b\hat{e}_{i,i},
$$
$$
(1-(\hat{e}_{i,i}+\hat{e}_{j,j}))a\hat{e}_{j,j}=(1-(\hat{e}_{i,i}+\hat{e}_{j,j}))b\hat{e}_{j,j}.
$$
\end{proof}

\begin{lemma}  \label{3.3}
Let $\Omega$ be an arbitrary set, $M(\Omega,B_{sk}(H))$ be the
Lie algebra of all maps of $\Omega$ to $K_n({\mathbb R})$. Let
$\mathcal{L}$ be a Lie subalgebra of $M(\Omega,B_{sk}(H))$ containing the family
$\{s_{i,j}\}_{i,j\in \Xi}$ and $\Delta$ be a 2-local spatial derivation
on $\mathcal{L}$ with respect to derivations implemented by an element in $M(\Omega,B_{sk}(H))$.
Then there exist $a\in M(\Omega,B_{sk}(H))$ such that
$$
\Delta (s_{i,j})=D^L_a(s_{i,j})
$$
for every pair of different indices $i$, $j$.
\end{lemma}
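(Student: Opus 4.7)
The plan is to mimic the proof of Lemma~\ref{2.5} in the infinite-dimensional setting, with Lemma~\ref{3.2} playing the role that Lemma~\ref{2.4} played there. The strategy has three parts: select a chain of implementers indexed by the pairs $(i,j)$, use Lemma~\ref{3.2} at each link to force agreement of the four cross blocks at the shared pair, and finally amalgamate the chain into a single element $a \in M(\Omega, B_{sk}(H))$.

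First, I would fix a well-ordering of the set of unordered pairs $\{\{i,j\} : i,j \in \Xi,\ i\ne j\}$ (possible by the axiom of choice; in Lemma~\ref{2.5} the authors use a concrete cyclic enumeration, which works verbatim when $|\Xi|$ is finite or countable and must be replaced by a transfinite enumeration in general). For each consecutive pair of pairs $(p_\alpha, p_{\alpha+1})$ in this enumeration, the 2-local spatial property of $\Delta$ applied to $(s_{p_\alpha}, s_{p_{\alpha+1}})$ produces an element $a_\alpha \in M(\Omega, B_{sk}(H))$ with $\Delta(s_{p_\alpha}) = D^L_{a_\alpha}(s_{p_\alpha})$ and $\Delta(s_{p_{\alpha+1}}) = D^L_{a_\alpha}(s_{p_{\alpha+1}})$. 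This reproduces, at the infinite-dimensional level, the big display of implementers in the proof of Lemma~\ref{2.5}.

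Next, I would apply Lemma~\ref{3.2} to each overlap: since $a_\alpha$ and $a_{\alpha+1}$ both implement $\Delta$ at $s_{p_{\alpha+1}}$, the four cross blocks $(1-\hat{e}_{i,i}-\hat{e}_{j,j})\,a\,\hat{e}_{i,i}$, $(1-\hat{e}_{i,i}-\hat{e}_{j,j})\,a\,\hat{e}_{j,j}$, $\hat{e}_{i,i}\,a\,(1-\hat{e}_{i,i}-\hat{e}_{j,j})$, and $\hat{e}_{j,j}\,a\,(1-\hat{e}_{i,i}-\hat{e}_{j,j})$ of $a_\alpha$ and $a_{\alpha+1}$ coincide, where $\{i,j\}=p_{\alpha+1}$. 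Chasing these equalities through the chain, one verifies that at every pair $(i,j)$ the cross blocks prescribed by $\Delta(s_{i,j})$ are intrinsic to $\Delta$ and compatible with every implementer in the chain. Since $D^L_a(s_{i,j})$ depends only on the four cross blocks of $a$ at $(i,j)$ (as is visible from the computation in the proof of Lemma~\ref{3.2}), it then suffices to select any single $a = a_{\alpha_0}$ in the chain whose cross blocks at every pair already match the prescribed ones; this delivers $\Delta(s_{i,j}) = D^L_a(s_{i,j})$ for all pairs.

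I expect the main obstacle to be the infinite chaining: in Lemma~\ref{2.5} the enumeration is a finite cycle that visits every pair and so the chain of Lemma~\ref{2.4}-equalities closes up immediately, whereas here $\Xi$ may be uncountable and the transfinite version requires bookkeeping at limit ordinals to propagate the compatibility around every pair simultaneously. A secondary subtlety is that the final $a$ must actually lie in $M(\Omega, B_{sk}(H))$, i.e.\ $a(\omega)\in B_{sk}(H)$ for each $\omega\in\Omega$; this forces one to pick $a$ as one of the implementers $a_{\alpha_0}$ (automatically bounded and skew-adjoint fiberwise) rather than assembling it entry-by-entry from different implementers, and then to verify via the chain that this single choice already has the correct cross blocks at every pair.
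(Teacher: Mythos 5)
Your strategy---a (transfinite) chain of implementers mimicking the cyclic enumeration in the proof of Lemma~\ref{2.5}---is not the route the paper takes for Lemma~\ref{3.3}, and it has a genuine gap at exactly the point you defer to ``bookkeeping.'' Each link of your chain, via Lemma~\ref{3.2}, forces $a_\alpha$ and $a_{\alpha+1}$ to agree only on the four cross blocks at the one shared pair $p_{\alpha+1}$; it gives no information about their cross blocks at any other pair. Hence there is no mechanism by which the correct cross blocks at a fixed pair $\{i,j\}$ propagate along the chain to a distant element $a_{\alpha_0}$: a link at a pair $\{k,l\}$ with $\{k,l\}\cap\{i,j\}=\emptyset$ says nothing about the block $\hat{e}_{i,i}\,a\,(1-\hat{e}_{i,i}-\hat{e}_{j,j})$. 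Your concluding step---``select any single $a=a_{\alpha_0}$ whose cross blocks at every pair already match the prescribed ones''---therefore presupposes exactly what has to be proved; an implementer chosen for two pairs disjoint from $\{i,j\}$ is simply unconstrained at $\{i,j\}$. In the infinite setting the chain also never ``closes up'' as the finite cycle of Lemma~\ref{2.5} does, and at limit ordinals there is no preceding link at all. This is not a technicality that transfinite induction repairs; it is the missing substance of the argument.

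The paper avoids the chain entirely. It invokes the 2-local property once for \emph{every} two distinct pairs, obtaining a family $a(i,j,k,l)$ with $\Delta(s_{i,j})=D^L_{a(i,j,k,l)}(s_{i,j})$ and $\Delta(s_{k,l})=D^L_{a(i,j,k,l)}(s_{k,l})$; the overlap graph is then complete rather than a cycle or a well-ordered chain. For fixed $(i,j)$, any two members $a(i,j,k,l)$ and $a(i,j,\xi,\eta)$ implement $\Delta$ at the same $s_{i,j}$, so Lemma~\ref{3.2} applies to them directly; combining these relations with the overlaps at $(k,l)$, and using the special cases $i\neq k,\ j=l$ and $i=k,\ j\neq l$ to recover the $\hat{e}_{i,i}\,a\,\hat{e}_{j,j}$ blocks excluded by Lemma~\ref{3.2}, the paper upgrades the conclusion to $\hat{e}_{i,i}\,a(i,j,k,l)\,(1-\hat{e}_{i,i})=\hat{e}_{i,i}\,a(i,j,\xi,\eta)\,(1-\hat{e}_{i,i})$ and its three companions, and from there concludes that all the implementers coincide, so any one of them is the required $a$. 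No enumeration of the index set, no well-ordering, and no limit-ordinal analysis is needed, and the resulting $a$ lies in $M(\Omega,B_{sk}(H))$ automatically because it is one of the chosen implementers (which also disposes of your secondary concern about assembling $a$ block by block). If you wish to keep a chain-type picture, you must at least add, for every pair $\{i,j\}$, a direct comparison between your candidate $a_{\alpha_0}$ and some implementer at $s_{i,j}$---which is precisely the complete family the paper uses.
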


\begin{proof}
By the definition of 2-local spatial derivation
on $\mathcal{L}$ with respect to derivations implemented by an element in $M(\Omega,B_{sk}(H))$
there exists a set $\{a(i,j,k,l)\}\subset M(\Omega,B_{sk}(H))$ such that
$$
\Delta (s_{i,j})=D^L_{a(i,j,k,l)}(s_{i,j}),\Delta (s_{k,l})=D^L_{a(i,j,k,l)}(s_{k,l})
$$
for every distinct pairs of indices $(i,j)$ and $(k,l)$.
Then by lemma \ref{3.2}
$$
\hat{e}_{i,i}a(i,j,k,l)(1-(\hat{e}_{i,i}+\hat{e}_{j,j}))=\hat{e}_{i,i}a(i,j,\xi,\eta)(1-(\hat{e}_{i,i}+\hat{e}_{j,j})),
$$
$$
(1-(\hat{e}_{i,i}+\hat{e}_{j,j}))a(i,j,k,l)\hat{e}_{i,i}=(1-(\hat{e}_{i,i}+\hat{e}_{j,j}))a(i,j,\xi,\eta)\hat{e}_{i,i},
$$
$$
\hat{e}_{j,j}a(i,j,k,l)(1-(\hat{e}_{i,i}+\hat{e}_{j,j}))=\hat{e}_{j,j}a(i,j,\xi,\eta)(1-(\hat{e}_{i,i}+\hat{e}_{j,j})),
$$
$$
(1-(\hat{e}_{i,i}+\hat{e}_{j,j}))a(i,j,k,l)\hat{e}_{j,j}=(1-(\hat{e}_{i,i}+\hat{e}_{j,j}))a(i,j,\xi,\eta)\hat{e}_{j,j}
$$
for all pairs $(i,j)$, $(k,l)$ and $(\xi,\eta)$ of mutually distinct indices. At the same time
$$
\hat{e}_{k,k}a(i,j,k,l)(1-(\hat{e}_{k,k}+\hat{e}_{l,l}))=\hat{e}_{k,k}a(k,l,\xi,\eta)(1-(\hat{e}_{k,k}+\hat{e}_{l,l})),
$$
$$
(1-(\hat{e}_{k,k}+\hat{e}_{l,l}))a(i,j,k,l)\hat{e}_{k,k}=(1-(\hat{e}_{k,k}+\hat{e}_{l,l}))a(k,l,\xi,\eta)\hat{e}_{k,k},
$$
$$
\hat{e}_{l,l}a(i,j,k,l)(1-(\hat{e}_{k,k}+\hat{e}_{l,l}))=\hat{e}_{l,l}a(k,l,\xi,\eta)(1-(\hat{e}_{k,k}+\hat{e}_{l,l})),
$$
$$
(1-(\hat{e}_{k,k}+\hat{e}_{l,l}))a(i,j,k,l)\hat{e}_{l,l}=(1-(\hat{e}_{k,k}+\hat{e}_{l,l}))a(k,l,\xi,\eta)\hat{e}_{l,l}.
$$
Hence, if $i\neq k$, $j=l$ then
$$
\hat{e}_{j,j}a(i,j,k,j)\hat{e}_{i,i}=\hat{e}_{j,j}a(k,j,\xi,\eta)\hat{e}_{i,i},
\hat{e}_{i,i}a(i,j,k,j)\hat{e}_{j,j}=\hat{e}_{i,i}a(k,j,\xi,\eta)\hat{e}_{j,j},
$$
and, if $i=k$, $j\neq l$ then
$$
\hat{e}_{i,i}a(i,j,i,l)\hat{e}_{j,j}=\hat{e}_{i,i}a(i,l,\xi,\eta)\hat{e}_{j,j},
\hat{e}_{j,j}a(i,j,i,l)\hat{e}_{i,i}=\hat{e}_{j,j}a(i,l,\xi,\eta)\hat{e}_{i,i}.
$$
So
$$
\hat{e}_{i,i}a(i,j,k,l)(1-\hat{e}_{i,i})=\hat{e}_{i,i}a(i,j,\xi,\eta)(1-\hat{e}_{i,i}),
$$
$$
(1-\hat{e}_{i,i})a(i,j,k,l)\hat{e}_{i,i}=(1-\hat{e}_{i,i})a(i,j,\xi,\eta)\hat{e}_{i,i},
$$
$$
\hat{e}_{j,j}a(i,j,k,l)(1-\hat{e}_{j,j})=\hat{e}_{j,j}a(i,j,\xi,\eta)(1-\hat{e}_{j,j}),
$$
$$
(1-\hat{e}_{j,j})a(i,j,k,l)\hat{e}_{j,j}=(1-\hat{e}_{j,j})a(i,j,\xi,\eta)\hat{e}_{j,j}
$$
for all pairs $(i,j)$, $(k,l)$ and $(\xi,\eta)$ of mutually distinct indices. Therefore
$$
a(i,j,k,l)=a(\alpha,\beta,\xi,\eta)
$$
for all pairs $(i,j)$, $(k,l)$, $(\alpha,\beta)$ and $(\xi,\eta)$ of mutually distinct indices. The proof is complete.
\end{proof}

The following theorem is the key result of this section.

\begin{theorem} \label{3.4}
Let $\Omega$ be an arbitrary set, $M(\Omega,B_{sk}(H))$ be the
Lie algebra of all maps from $\Omega$ to $B_{sk}(H)$. Let
$\mathcal{L}$ be a Lie subalgebra of $M(\Omega,B_{sk}(H))$ containing the family
$\{s_{i,j}\}_{i,j\in \Xi}$. Then any 2-local spatial derivation
on $\mathcal{L}$ with respect to derivations implemented by an element in $M(\Omega,B_{sk}(H))$
is a spatial derivation.
\end{theorem}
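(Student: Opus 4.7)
The plan is to carry out the operator-valued analogue of the proof of Theorem \ref{2.6}. Lemma \ref{3.3} already supplies a single element $a\in M(\Omega,B_{sk}(H))$ with $\Delta(s_{i,j})=D^L_a(s_{i,j})$ for every distinct pair $i,j\in\Xi$, so it suffices to show that for each $x\in\mathcal{L}$ one has $\Delta(x)=D^L_a(x)$. I would fix such an $x$ and a distinct pair $i,j\in\Xi$, then invoke the 2-local hypothesis at the pair $(s_{i,j},x)$ to produce an element $c=c(i,j,x)\in M(\Omega,B_{sk}(H))$ satisfying $\Delta(s_{i,j})=D^L_c(s_{i,j})$ and $\Delta(x)=D^L_c(x)$. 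Since $a$ and $c$ both implement $\Delta$ on $s_{i,j}$, Lemma \ref{3.2} applied to the pair $(a,c)$ gives, writing $p_{ij}:=\hat e_{i,i}+\hat e_{j,j}$, that the four corner pieces $\hat e_{i,i}a(1-p_{ij})$, $\hat e_{j,j}a(1-p_{ij})$, $(1-p_{ij})a\hat e_{i,i}$, $(1-p_{ij})a\hat e_{j,j}$ agree with the corresponding pieces of $c$.

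The core step is to show that $\hat e_{i,i}(cx-xc)\hat e_{j,j}$ depends only on these controlled corner pieces of $c$. Expanding $cx$ and $xc$ by inserting $1=p_{ij}+(1-p_{ij})$ on each side of $x$ and using that, because $c$ and $x$ are skew-adjoint-valued on a \emph{real} Hilbert space, the diagonal pieces $\hat e_{k,k}c\hat e_{k,k}$ and $\hat e_{k,k}x\hat e_{k,k}$ all vanish, the surviving terms assemble into
$$
\hat e_{i,i}(cx-xc)\hat e_{j,j}=\bigl(\hat e_{i,i}c(1-p_{ij})\bigr)\bigl((1-p_{ij})x\hat e_{j,j}\bigr)-\bigl(\hat e_{i,i}x(1-p_{ij})\bigr)\bigl((1-p_{ij})c\hat e_{j,j}\bigr).
$$
Substituting the two $c$-factors by the corresponding $a$-factors via Lemma \ref{3.2} and running the same identity in reverse yields $\hat e_{i,i}(cx-xc)\hat e_{j,j}=\hat e_{i,i}(ax-xa)\hat e_{j,j}$; multiplying by $2$ gives $\hat e_{i,i}\Delta(x)\hat e_{j,j}=\hat e_{i,i}D^L_a(x)\hat e_{j,j}$. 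The symmetric computation with $i$ and $j$ swapped, which uses the other two corner identities from the same application of Lemma \ref{3.2}, yields $\hat e_{j,j}\Delta(x)\hat e_{i,i}=\hat e_{j,j}D^L_a(x)\hat e_{i,i}$.

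To finish, let $y:=\Delta(x)-D^L_a(x)\in M(\Omega,B_{sk}(H))$. Then $\hat e_{i,i}y\hat e_{j,j}=0$ for all $i,j\in\Xi$: for $i\neq j$ this comes from the previous paragraph, and for $i=j$ it is automatic since $y(\omega)$ is a real skew-adjoint operator. Evaluated pointwise on $\Omega$, this means every matrix entry of the bounded operator $y(\omega)$ relative to the maximal orthonormal family underlying $\{e_i\}_{i\in\Xi}$ is zero, so $y(\omega)=0$ for every $\omega$, hence $\Delta(x)=D^L_a(x)$. Since $x$ was arbitrary, $\Delta=D^L_a$ is a spatial derivation. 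The main obstacle is the displayed operator identity in the second paragraph: it plays the role of the matrix formula $(bx-xb)^{i,j}=\sum_{k\neq i,j}(b^{i,k}x^{k,j}-x^{i,k}b^{k,j})$ used in Theorem \ref{2.6}, and must be established without invoking any infinite sum over $\Xi$; this is possible precisely because the vanishing diagonals of real skew-adjoint operators play the role of the conditions $a^{k,k}=x^{k,k}=0$ that held automatically in the matrix case, and once this operator-level identity is in hand the rest of the argument, including the pointwise reconstruction of $y$ from its corner cuts, is routine.
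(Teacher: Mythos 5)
Your proposal is correct and follows essentially the same route as the paper's proof: Lemma \ref{3.3} fixes the single implementing element $a$, the 2-local hypothesis at $(s_{i,j},x)$ together with Lemma \ref{3.2} controls the relevant corner pieces, and the vanishing of the diagonal compressions of real skew-adjoint elements is exactly the mechanism the paper uses to kill the $(\hat e_{i,i}+\hat e_{j,j})$-contribution before substituting $a$ for $c$. Your final reconstruction of $\Delta(x)-D^L_a(x)$ from its vanishing matrix entries is spelled out more explicitly than in the paper, but it is the same argument.
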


\begin{proof}
Let $\Delta: \mathcal{L}\to \mathcal{L}$ be an arbitrary 2-local spatial derivation
on $\mathcal{L}$ with respect to derivations implemented by an element in $M(\Omega,B_{sk}(H))$.
By lemma \ref{3.3} there exists an element $a\in M(\Omega,B_{sk}(H))$ such that
$$
\Delta(s_{i,j})=D^L_a(s_{i,j})
$$
for every pair of different indices $i$, $j$.

Let $i$, $j$ be an arbitrary pair of different indices, $x$ be an arbitrary element in $\mathcal{L}$ and $b$ be an element in $\mathcal{L}$ such that
$$
\Delta(s_{i,j})=D^L_b(s_{i,j}),
\Delta(x)=D^L_b(x)
$$
Then
$$
\hat{e}_{i,i}a(1-(e_{i,i}+e_{j,j}))=\hat{e}_{i,i}b(1-(e_{i,i}+e_{j,j})), \hat{e}_{j,j}a(1-(e_{i,i}+e_{j,j}))=\hat{e}_{j,j}b(1-(e_{i,i}+e_{j,j})),
$$
$$
(1-(e_{i,i}+e_{j,j}))a\hat{e}_{i,i}=(1-(e_{i,i}+e_{j,j}))b\hat{e}_{i,i}, (1-(e_{i,i}+e_{j,j}))a\hat{e}_{j,j}=(1-(e_{i,i}+e_{j,j}))b\hat{e}_{j,j}
$$
by lemma \ref{3.2}.
For each pair $\alpha$, $\beta$ of indices there exist $\lambda_{\alpha,\beta}$, $\mu_{\alpha,\beta}$ in the commutative algebra
of all maps from $\Omega$ to ${\Bbb R}$ such that
$$
e_{\alpha,\alpha}ae_{\beta,\beta}=\lambda_{\alpha,\beta}e_{\alpha,\beta}\,\,and\,\,e_{\beta,\beta}ae_{\alpha,\alpha}=-\lambda_{\alpha,\beta}e_{\beta,\alpha}
$$
and
$$
e_{\alpha,\alpha}be_{\beta,\beta}=\mu_{\alpha,\beta}e_{\alpha,\beta}\,\,and\,\,e_{\beta,\beta}be_{\alpha,\alpha}=-\mu_{\alpha,\beta}e_{\beta,\alpha}.
$$
Therefore
$$
e_{i,i}a(e_{i,i}+e_{j,j})xe_{j,j}-e_{i,i}x(e_{i,i}+e_{j,j})ae_{j,j}+
$$
$$
e_{j,j}a(e_{i,i}+e_{j,j})xe_{i,i}-e_{j,j}x(e_{i,i}+e_{j,j})ae_{i,i}=0
$$
and
$$
e_{i,i}b(e_{i,i}+e_{j,j})xe_{j,j}-e_{i,i}x(e_{i,i}+e_{j,j})be_{j,j}+
$$
$$
e_{j,j}b(e_{i,i}+e_{j,j})xe_{i,i}-e_{j,j}x(e_{i,i}+e_{j,j})be_{i,i}=0.
$$
So, by Lemma \ref{3.2}
$$
e_{i,i}\Delta(x)e_{j,j}+e_{j,j}\Delta(x)e_{i,i}=e_{i,i}2(bx-xb)e_{j,j}+e_{j,j}2(bx-xb)e_{i,i}=
$$
$$
2e_{i,i}b(e_{i,i}+e_{j,j})xe_{j,j}-2e_{i,i}x(e_{i,i}+e_{j,j})be_{j,j}+
$$
$$
2e_{j,j}b(e_{i,i}+e_{j,j})xe_{i,i}-2e_{j,j}x(e_{i,i}+e_{j,j})be_{i,i}+
$$
$$
2e_{i,i}b(1-(e_{i,i}+e_{j,j}))xe_{j,j}-2e_{i,i}x(1-(e_{i,i}+e_{j,j}))be_{j,j}+
$$
$$
2e_{j,j}b(1-(e_{i,i}+e_{j,j}))xe_{i,i}-2e_{j,j}x(1-(e_{i,i}+e_{j,j}))be_{i,i}=
$$
$$
2e_{i,i}a(e_{i,i}+e_{j,j})xe_{j,j}-2e_{i,i}x(e_{i,i}+e_{j,j})ae_{j,j}+
$$
$$
2e_{j,j}a(e_{i,i}+e_{j,j})xe_{i,i}-2e_{j,j}x(e_{i,i}+e_{j,j})ae_{i,i}+
$$
$$
2e_{i,i}a(1-(e_{i,i}+e_{j,j}))xe_{j,j}-2e_{i,i}x(1-(e_{i,i}+e_{j,j}))ae_{j,j}+
$$
$$
2e_{j,j}a(1-(e_{i,i}+e_{j,j}))xe_{i,i}-2e_{j,j}x(1-(e_{i,i}+e_{j,j}))ae_{i,i}=
$$
$$
2(ax-xa)=[a,x]-[x,a]=D^L_a(x).
$$
Since $x$ is chosen arbitrarily $\Delta$ is a spatial derivation. This completes the proof.
\end{proof}

In particular, theorem \ref{3.4} contains the following theorem.

\begin{theorem} \label{3.5}
Let $\Omega$ be an arbitrary set, $M(\Omega,B_{sk}(H))$ be the
Lie algebra of all maps of $\Omega$ to $B_{sk}(H)$.
Then any 2-local inner derivation on the Lie algebra $M(\Omega,B_{sk}(H))$ is an inner derivation.
\end{theorem}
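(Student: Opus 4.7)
The plan is to deduce Theorem \ref{3.5} as a direct specialization of Theorem \ref{3.4} by taking $\mathcal{L}=M(\Omega,B_{sk}(H))$ itself. For this I need to verify the two structural hypotheses needed to apply Theorem \ref{3.4}, and then check that the notion of ``2-local inner derivation'' on $M(\Omega,B_{sk}(H))$ coincides with the notion of ``2-local spatial derivation with respect to derivations implemented by an element in $M(\Omega,B_{sk}(H))$'' that appears in the hypothesis of Theorem \ref{3.4}.

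First I would verify the containment hypothesis. By definition $s_{i,j}=\hat{e}_{i,j}-\hat{e}_{j,i}$, and each $\hat{e}_{i,j}$ is the constant map $\Omega\to B_{sk}(H)$ (after identifying $\hat{e}_{i,j}-\hat{e}_{j,i}$ with the constant-valued matrix in $B_{sk}(H)$ whose only nonzero entries are the $(i,j)$- and $(j,i)$-components), so $s_{i,j}\in M(\Omega,B_{sk}(H))$ for every pair of distinct $i,j\in\Xi$. Therefore the Lie subalgebra $\mathcal{L}=M(\Omega,B_{sk}(H))$ of itself contains $\{s_{i,j}\}_{i,j\in\Xi}$, as required.

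Next I would reconcile the two notions of derivation. By the definition given at the beginning of Section 1, an inner derivation on a Lie algebra $\mathcal{L}$ is a map of the form $D^L_a(x)=[a,x]-[x,a]$ with $a\in\mathcal{L}$; taking $\mathcal{L}=M(\Omega,B_{sk}(H))$ this is exactly a spatial derivation on $\mathcal{L}$ implemented by an element of $M(\Omega,B_{sk}(H))$, in the terminology of the Definition preceding Lemma \ref{3.2}. Consequently a 2-local inner derivation $\Delta$ on $M(\Omega,B_{sk}(H))$ is, word for word, a 2-local spatial derivation with respect to derivations implemented by an element in $M(\Omega,B_{sk}(H))$.

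With these two identifications in place, Theorem \ref{3.4} applies verbatim to $\Delta$ and yields an element $a\in M(\Omega,B_{sk}(H))$ such that $\Delta(x)=D^L_a(x)$ for every $x\in M(\Omega,B_{sk}(H))$, i.e.\ $\Delta$ is an inner derivation. Since every step is a definitional reconciliation rather than a new calculation, I do not anticipate any real obstacle: the entire content of Theorem \ref{3.5} has been packed into Theorem \ref{3.4}, and the only thing to watch is that the implementing element produced by Theorem \ref{3.4} lives in $M(\Omega,B_{sk}(H))$, which is automatic since that is precisely the ambient algebra chosen in the hypothesis.
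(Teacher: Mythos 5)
Your proposal is correct and is exactly the paper's (unstated) argument: the paper simply remarks that Theorem \ref{3.4} ``contains'' this statement, and you have spelled out the two definitional reconciliations (the $s_{i,j}$ lie in $M(\Omega,B_{sk}(H))$, and inner $=$ spatial when $\mathcal{L}$ is the whole ambient algebra) that make the specialization legitimate. The only cosmetic point worth a footnote is the factor of $2$ between $D^L_a(x)=[a,x]-[x,a]=2(ax-xa)$ and the paper's spatial form $x\mapsto ax-xa$, which is harmless since one may replace $a$ by $2a$ (or $a/2$) in $M(\Omega,B_{sk}(H))$.
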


Let $X$ be a hyperstonean compact such that $supp(X)=\Omega$, $C(X)$
denotes the algebra of all ${\mathbb R}$-valued continuous maps on $X$.
There exists a subalgebra $\mathcal{N}$ in $M(\Omega,B_{sk}(H))$ which is a real von Neumann algebra
with the center isomorphic to $C(X)$ (see \cite[Page 12]{AFN3}). More precisely
$\mathcal{N}$ is a real von Neumann algebra of type I.

Similar to $K_n({\mathbb R})$, the vector space
$$
\mathcal{K}=\{a\in \mathcal{M}: a^*=-a\}
$$
of all skew-adjoint elements in $\mathcal{M}$
is a Lie algebra with respect to Lie multiplication
$$
[a,b]=ab-ba, a,b\in \mathcal{K}.
$$
The Lie algebra $\mathcal{K}$ is a Lie subalgebra of $M(\Omega,B_{sk}(H))$
containing the family $\{s_{i,j}\}_{i,j\in \Xi}$. Hence, by theorem \ref{3.4} we have the following result.

\begin{theorem} \label{3.5}
Every 2-local spatial derivation
on $\mathcal{K}$ with respect to derivations implemented by an element in $M(\Omega,B_{sk}(H))$
is a spatial derivation.
\end{theorem}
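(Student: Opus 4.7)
The plan is to deduce this theorem directly from Theorem \ref{3.4}, which has already done all the nontrivial work. The only task is to verify that the Lie algebra $\mathcal{K}$ satisfies the hypotheses of Theorem \ref{3.4} as a Lie subalgebra of $M(\Omega, B_{sk}(H))$ that contains the family $\{s_{i,j}\}_{i,j \in \Xi}$.

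First I would note that $\mathcal{N} \subseteq M(\Omega, B_{sk}(H))$ is a real von Neumann algebra (as stated, see \cite[Page 12]{AFN3}), so in particular $\mathcal{K}$ (its skew-adjoint part) is contained in $M(\Omega, B_{sk}(H))$. The fact that $\mathcal{K}$ is a Lie algebra under $[a,b] = ab - ba$ is already noted in the excerpt, and it is clear that $\mathcal{K}$ is a Lie subalgebra of $M(\Omega, B_{sk}(H))$ because for $a, b \in \mathcal{K}$ the bracket $[a,b]$ remains skew-adjoint and remains in $\mathcal{N}$.

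Next I would check that each $s_{i,j} = \hat{e}_{i,i} - \hat{e}_{j,i}$ belongs to $\mathcal{K}$. The family $\{\hat{e}_{i,j}\}$ consists of matrix units associated with the maximal orthogonal family of minimal projections $\{e_i\}_{i \in \Xi}$ of the type I real von Neumann algebra $\mathcal{N}$, so they lie in $\mathcal{N}$; since $\hat{e}_{i,j}^* = \hat{e}_{j,i}$, we get $s_{i,j}^* = -s_{i,j}$, and thus $s_{i,j} \in \mathcal{K}$.

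With these verifications complete, Theorem \ref{3.4} applies verbatim to $\mathcal{L} = \mathcal{K}$: any 2-local spatial derivation on $\mathcal{K}$ with respect to derivations implemented by elements of $M(\Omega, B_{sk}(H))$ is implemented by a single $a \in M(\Omega, B_{sk}(H))$, hence is a spatial derivation in the sense of the definition. There is no serious obstacle here — the content of the theorem is entirely in Theorem \ref{3.4}, and this statement records the specialization to the concrete Lie algebra $\mathcal{K}$ of skew-adjoint elements in the type I real von Neumann algebra $\mathcal{N}$.
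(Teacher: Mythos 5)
Your proposal is correct and follows exactly the route the paper takes: the result is an immediate specialization of Theorem \ref{3.4}, requiring only the observation that $\mathcal{K}$ is a Lie subalgebra of $M(\Omega,B_{sk}(H))$ containing all the $s_{i,j}$, which you verify. (One typo: $s_{i,j}=\hat{e}_{i,j}-\hat{e}_{j,i}$, not $\hat{e}_{i,i}-\hat{e}_{j,i}$; your subsequent computation with $\hat{e}_{i,j}^{*}=\hat{e}_{j,i}$ uses the correct definition anyway.)
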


Let $\mathcal{K}(H)$ be the C$^*$-algebra of all compact
operators on the Hilbert space $H$ over ${\mathbb R}$, $\mathcal{K}_{K}(H)$ be
the Lie algebra of all skew-adjoint compact operators on the Hilbert space $H$.
Let $Q$ be a topological space such that $supp(Q)=\Omega$. Then
the vector space $C(Q,\mathcal{K}_{K}(H))$ of all continuous maps
of $Q$ to $\mathcal{K}_{K}(H)$ is a Lie subalgebra of $M(\Omega,B_{sk}(H))$
containing the family $\{s_{i,j}\}_{i,j\in \Xi}$. Therefore by theorem \ref{3.4} we have
the following result.

\begin{theorem} \label{3.6}
Every 2-local spatial derivation
on the Lie algebra $C(Q,\mathcal{K}_{K}(H))$ with respect to derivations implemented by an element in $M(\Omega,B_{sk}(H))$
is a spatial derivation.
\end{theorem}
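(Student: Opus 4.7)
The plan is to deduce Theorem~\ref{3.6} directly from Theorem~\ref{3.4}; the entire work consists of checking that $C(Q, \mathcal{K}_K(H))$ fits into the framework of that theorem, namely that it is a Lie subalgebra of $M(\Omega, B_{sk}(H))$ which contains every $s_{i,j}$.

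First I would verify the subalgebra structure. Since $\mathcal{K}_K(H) \subset B_{sk}(H)$, every continuous map $f\colon Q \to \mathcal{K}_K(H)$ can be regarded as a (not necessarily continuous) map from $\Omega$ to $B_{sk}(H)$ via the identification coming from $\mathrm{supp}(Q) = \Omega$; this gives a natural inclusion $C(Q, \mathcal{K}_K(H)) \hookrightarrow M(\Omega, B_{sk}(H))$. Closure under the Lie bracket is immediate pointwise: if $f, g \in C(Q, \mathcal{K}_K(H))$, then $[f,g](q) = f(q)g(q) - g(q)f(q)$ is a commutator of two compact skew-adjoint operators, hence itself compact and skew-adjoint, while continuity of $[f,g]$ follows from continuity of operator multiplication on bounded sets. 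The bracket computed inside $C(Q, \mathcal{K}_K(H))$ therefore coincides with the bracket inherited from $M(\Omega, B_{sk}(H))$, which is exactly what \emph{Lie subalgebra} requires here.

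Next I would check that each $s_{i,j} = \hat{e}_{i,j} - \hat{e}_{j,i}$ lies in $C(Q, \mathcal{K}_K(H))$. By construction $\hat{e}_{i,j}$ is the constant map with value $e_{i,j}$, so $s_{i,j}$ is the constant map with value $e_{i,j} - e_{j,i}$. This value is a rank-two skew-adjoint operator on $H$, in particular it belongs to $\mathcal{K}_K(H)$, and constant maps on $Q$ are certainly continuous; hence $s_{i,j} \in C(Q, \mathcal{K}_K(H))$ for every pair of distinct $i, j \in \Xi$.

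With these two verifications in hand, $\mathcal{L} := C(Q, \mathcal{K}_K(H))$ satisfies all the hypotheses of Theorem~\ref{3.4}. Applying that theorem to an arbitrary $2$-local spatial derivation $\Delta\colon \mathcal{L} \to \mathcal{L}$ with respect to derivations implemented by elements of $M(\Omega, B_{sk}(H))$ yields an element $a \in M(\Omega, B_{sk}(H))$ such that $\Delta(x) = [a, x] - [x, a]$ for all $x \in \mathcal{L}$, which is the definition of a spatial derivation on $\mathcal{L}$. The only real obstacle is making sure that the ambient algebra in which $a$ lives is the one required by the definition (i.e.\ $M(\Omega, B_{sk}(H))$, not $\mathcal{L}$ itself), but this is already built into the hypothesis on $\Delta$ and the conclusion of Theorem~\ref{3.4}, so nothing further is needed.
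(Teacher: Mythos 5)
Your proposal is correct and is essentially the paper's own argument: the paper likewise obtains Theorem~\ref{3.6} by observing that $C(Q,\mathcal{K}_{K}(H))$ is a Lie subalgebra of $M(\Omega,B_{sk}(H))$ containing all the $s_{i,j}$ and then invoking Theorem~\ref{3.4}. Your version merely spells out the verification (closure of compact skew-adjoint operators under commutators, the $s_{i,j}$ being constant finite-rank maps) that the paper leaves implicit.
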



\begin{thebibliography}{00}

\bibitem{AG}
N.I. Akhiezer, I.M. Glazman,
Theory of linear operators in Hilbert space,
New York: Dover Publication, Inc., 1993.

\bibitem{AFN3}
F.N. Arzikulov,
Infinite order decompositions of C*-algebras,
SpringerPlus. 5(1) (2016) 1--13.

\bibitem{AA3}
Sh. Ayupov, F. Arzikulov,
2-Local derivations on associative and Jordan matrix rings over commutative rings,
Linear Algebra Appl. 522 (2017) 28--50.


\bibitem{AyuKudRak16}
Sh. Ayupov, K. Kudaybergenov, I. Rakhimov,
2-local derivations on finite-dimensional Lie algebras,
Linear Algebra Appl. 474 (2015) 1--11.


\bibitem{CLW}
L. Chen, F. Lu, T. Wang,
Local and 2-local Lie derivations of operator algebras
on Banach spaces,
Integr. Equ. Oper. Theory 77 (2013), 109-121

\bibitem{HLAH}
J. He, J. Li, G. An, W. Huang,	
Characterizations of 2-local derivations and local Lie
derivations on some algebras,
arXiv:1611.01632 [math.OA] (2016)

\bibitem{LaiZheng15}
X. Lai, Z.X. Chen,
2-local derivations of finite-dimensional simple Lie algebras (Chinese),
Acta Math. Sinica (Chin. Ser.) 58 (2015) 847--852.

\bibitem{Liu16}
L. Liu,
2-local Lie derivations on semi-finite factor von Neumann algebras.
Linear Multilinear Algebra 64 (2016) 1679--1686.

\bibitem{S}
P. \v{S}emrl,
Local automorphisms and derivations on $B(H)$,
Proc. Amer. Math. Soc. 125 (1997) 2677--2680.

\end{thebibliography}
\end{document}